\newtheorem{theo}{Theorem}[section]
\newtheorem{lem}[theo]{Lemma}
\newtheorem{prop}[theo]{Proposition}
\newtheorem{defn}[theo]{Definition}
\theoremstyle{definition}
\newtheorem*{notaetoile}{Notation}
\def\a{{\a}}
\def\a{{\mathfrak a}}
\def\C{\mathbb C}
\def\E{\mathbb E}
\def\N{\mathbb N}
\def\Q{\mathbb Q}
\def\R{\mathbb R}
\def\sh{\operatorname{sh}}
\def\Z{\mathbb{Z}}
\begin{document} 

\title[$\hat{\mathfrak{sl}_2}(\C)$ and a conditioned space-time Brownian motion]{The affine Lie algebra $\hat{\mathfrak{sl}_2}(\C)$ and a conditioned space-time Brownian motion}

\author{Manon Defosseux}
\address{Laboratoire de Math\'ematiques Appliqu\'ees \`a Paris 5, Universit\'e Paris 5, 45 rue des  Saints P\`eres, 75270 Paris Cedex 06.}
\email{manon.defosseux@parisdescartes.fr}

\begin{abstract} We construct a sequence of Markov processes on the set of dominant weights of the Affine Lie algebra $\hat{\mathfrak{sl}_2}(\C)$ which involves tensor product of irreducible highest weight modules of $\hat{\mathfrak{sl}_2}(\C)$ and show that it converges towards a Doob's space-time harmonic transformation of  a space-time Brownian motion.
\end{abstract}
 
\maketitle  
\section{Introduction}
In \cite{BBO}, Ph. Biane, Ph. Bougerol and N. O'Connell establish a wide extension of Pitman's theorem on Brownian motion and three dimensional Bessel process, in the framework of representation theory of semi-simple complex Lie algebras. In this framework the representation of the Bessel process by a functional of a standard Brownian motion $(B_t)_{t\ge 0}$ on $\R$,
$$(B_t-2\underset{0\le s\le t}\inf B_s, t\ge 0),$$ 
appears to be the continuous counterpart of a similar result which holds for a random walk on the set of integral weights of $\mathfrak{sl}_2(\C)$ and a path transformation connected with the Littelmann paths model for semi-simple complex Lie algebras (see for instance \cite{Littelmann} for a description of this model). 

In \cite{LLP}, C. Lecouvey, E. Lesigne and M. Peign\'e consider the case when $\mathfrak{g}$ is a Kac-Moody algebra and develop some aspects of \cite{BBO} in that framework. In particular, they focus on some Markov chains on the Weyl chamber of a Kac-Moody algebra, which are obtained in a similar way as in \cite{BBO}, except that the reference measure can't be the uniform measure when the dimension of the Kac-Moody algebra is infinite. Let us say briefly how the Markov chains are obtained for a Kac-Moody algebra $\mathfrak{g}$. As in the finite dimensional case, for  a dominant integral weight $\lambda$ of $\mathfrak{g}$ one defines the character of the irreducible highest-weight representation $V(\lambda)$ of $\mathfrak{g}$ with highest weight $\lambda$, as a formal series
$$\mbox{ch}_\lambda=\sum_{\mu}\mbox{dim} (V(\lambda)_\mu)e^{ \mu},$$
where $V(\lambda)_\mu$ is the weight space of $V(\lambda)$ corresponding to the weight $\mu$. Actually  for every $h$ in a subset of the Cartan subalgebra which doesn't depend on $\lambda$ the series 
$\sum_{\mu}\mbox{dim} (V(\lambda)_\mu)e^{\langle \mu,h\rangle}$ is absolutely convergente.  For  two dominant weights $\omega$ and $\lambda$, the following decomposition
$$\mbox{ch}_{\omega}\mbox{ch}_{\lambda}=\sum_{\beta\in P_+} m_{\lambda}(\beta)\mbox{ch}_{\beta},$$
where $m_{\lambda}(\beta)$ is the multiplicity of the module with highest weight $\beta$ in the decomposition of $V(\omega)\otimes V(\lambda)$, allows to define a transition probability  $q_\omega$ on the set of dominant weights, letting 
for $\beta$ and $\lambda$ two dominant weights of $\mathfrak{g}$, 
\begin{align*}
q_\omega(\lambda,\beta)=\frac{\mbox{ch}_\beta(h)}{\mbox{ch}_\lambda(h) \mbox{ch}_\omega(h)}m_{\lambda}(\beta),
\end{align*}
 where $h$ is chosen in the region of convergence of the characters. It is a natural question to ask if there exists a  sequence $(h_n)_{n\ge 0}$ of elements of $\mathfrak{h}$ such that the corresponding sequence of Markov chains  converges towards a continuous process and what the limit is. 

In this paper, we consider the case when $\mathfrak{g}$ is the Kac-Moody algebra of type $A^{(1)}_1$  and $\omega$ is  its fundamental weight $\Lambda_0$. There is no reason to think that the results are not true in a more general context but this case presents the advantage that explicit computations can easily be done. We show that the sequence of Markov chains, with a proper normalization, converges, for a particular sequence of $(h_n)_{n\ge 0}$, towards a Doob's space-time harmonic transformation of a space-time Brownian motion killed on the boundary of a time-dependent domain. This process is related to the heat equation 
$$\frac{1}{2}\Delta+\frac{\partial}{\partial t}=0,$$
in a time-dependent domain, with Dirichlet boundary conditions and the theta functions play a crucial role in the construction.  One can find an extensive literature devoted to the relationship between Brownian motion and the heat equation. One can see for instance \cite{Doob} for an introduction and \cite{Crank} for a review of various problems specifically related to time-dependent boundaries.

The paper is organized as follows. Basic definitions and notations related to representation theory of the affine Lie algebra $\hat{\mathfrak{sl}_2}(\C)$ are given  in section \ref{Basic notations and definitions}. We define in section \ref{Markovchains} random walks on the set of integral weights of $\hat{\mathfrak{sl}_2}(\C)$ and Markov chains on the set of its dominant integral weights, considering tensor products of irreducible highest weight representations of $\hat{\mathfrak{sl}_2}(\C)$. In section \ref{Brownian}, for any positive real numbers $x$ and $u$ such that $x<u$, we define a space-time Brownian motion $(t+u,B_t)_{t\ge 0}$ starting from $(u,x)$, conditioned to remain in the domain $$D=\{(r,z)\in \R\times \R: 0\le z\le r\}.$$ For this we  introduce a space-time harmonic function   remaining positive on $D$ which appears naturally considering  the limit of a sequence of characters of $\hat{\mathfrak{sl}_2}(\C)$. We prove in section \ref{convergence} that this conditioned space-time Brownian motion is the limit of a sequence of Markov processes constructed in section \ref{Markovchains}. We show in section \ref{Brownianinterval} how it is related to a Brownian motion  conditoned - in Doob's sense - to remain in an interval.

\section{The affine Lie algebra  $\hat{\mathfrak{sl_2}}(\C)$}\label{Basic notations and definitions}
We consider the affine Lie algebra $\hat{\mathfrak{sl}_2}(\C)$ associated to the generalized Cartan matrix $$A=\begin{pmatrix}
2&-2 \\
-2&2
\end{pmatrix}.$$ The reader is invited to refer to \cite{Kac} for a detailed description of this object. Let $\mathfrak{h}$ be a Cartan subalgebra of $\hat{\mathfrak{sl}_2}(\C)$. We denote by $S=\{\alpha_0,\alpha_1\}$ the set of simple roots and by   $\{\alpha_0^\vee,\alpha^\vee_1\}$ the set of simple coroots. Let $\Lambda_0$ be a fundamental weight such that $\langle \Lambda_0,\alpha^\vee_i\rangle=\delta_{i0}$, $i\in \{0,1\}$, and $\{\alpha_0,\alpha_1,\Lambda_0\}$ is a basis of $\mathfrak{h}^*$. We denote by $\mathfrak{h}_\R$ the subset of $\mathfrak{h}$ defined by 
$$\mathfrak{h}_\R=\{x\in \mathfrak{h}:  \langle\Lambda_0,x\rangle\in \R, \textrm{ and } \langle\alpha_i,x\rangle\in \R, i\in \{0,1\}\}.$$  Let $\delta=\alpha_0+\alpha_1$ be the so-called null root. We denote by $P$ (resp. $P_+$) the set of integral (resp. dominant) weights defined by 
$$P=\{\lambda\in \mathfrak{h}^*:  \langle \lambda,\alpha^\vee_i \rangle\in \Z, \, i=0,1\},$$
 $$(\textrm{resp. } P_+=\{\lambda\in P:  \langle\lambda,\alpha^\vee_i \rangle\ge 0, \, i=0,1\}).$$
The Cartan subalgebra  $\mathfrak{h}$ is equipped with a non degenerate symmetric bilinear form $( .,.)$ defined below, which  identifies $\mathfrak{h}$ and $\mathfrak{h}^*$, through the linear isomorphism 
\begin{align*}
\nu:\, \,&\mathfrak{h}\to \mathfrak{h}^*, \\
& h\mapsto (h,.).
\end{align*} We still denote by $(.,.)$ the induced non degenerate symmetric bilinear form   on $\mathfrak{h}^*$. It is defined on $\mathfrak{h}^*$ by  
 \begin{align*}
\left\{
    \begin{array}{ll}
       ( \Lambda_0,\alpha_1)&= 0 \\
        ( \Lambda_0,\Lambda_0)&= 0 \\
 (\delta,\alpha_1)&= 0 \\
( \Lambda_0,\delta)&=1\\
 (\alpha_1,\alpha_1)&=2.
    \end{array}
\right.
\end{align*}
The level of an integral weight $\lambda\in P$, is defined as the integer $(\delta,\lambda)$. For $k\in \N$, we denote by $P_k$ the set integral weights of level $k$. It is defined by
$$P_k=\{\lambda\in P: (\delta,\lambda)=k\}.$$
That is, an integral weight of level $k$  can be written 
$$k\Lambda_0+\frac{x}{2}\alpha_1+y\delta,$$
where $x\in \Z$, $y\in \C$, and a dominant weight of level $k$  can be written 
$$k\Lambda_0+\frac{x}{2}\alpha_1+y\delta,$$
where $x\in \{0,\dots,k\}$, $y\in \C$.
Recall the following important property : all weights of an highest weight irreducible representation of $\hat{\mathfrak{sl}_2}(\C)$ have the same level. 
\begin{notaetoile} For $\lambda\in \mathfrak{h}^*$, the projection of $\lambda$ on $vect\{\Lambda_0,\alpha_1\}$,   denoted $\bar{\lambda}$, is defined by   $\bar{\lambda}=x\Lambda_0+y\alpha_1$, when $\lambda=x\Lambda_0+y\alpha_1+z\delta$, $x,y,z\in \C$.
\end{notaetoile}

\paragraph{\bf Characters} For $\lambda\in P_{+}$, we denote by $\mbox{ch}_\lambda$ the character of the irreducible highest-weight module $V(\lambda)$  of $\hat{\mathfrak{sl}_2}(\C)$ with highest weight $\lambda$. That is
$$\mbox{ch}_\lambda(h) = \sum_{\mu \in P}\mbox{dim} (V(\lambda)_\mu)e^{\langle \mu ,h\rangle} , \quad h\in \mathfrak{h},$$
where $V(\lambda)_\mu$ is the weight space of $V(\lambda)$ corresponding to the weight $\mu$. The above series converges absolutely for every $h\in \mathfrak{h}$ such that $\mbox{Re}\langle \delta,h\rangle>0$ (see chapter $11$ of \cite{Kac}). For $\beta\in \mathfrak{h}^*$, we write $\mbox{ch}_\lambda(\beta)$ for $\mbox{ch}_\lambda(\nu^{-1}(\beta))$. We have
$$\mbox{ch}_\lambda(\beta) = \sum_{\mu \in P}\mbox{dim} (V(\lambda)_\mu)e^{( \mu ,\beta)} , \quad \beta\in \mathfrak{h}^*.$$ 
  The Weyl character's formula states that 
\begin{align*}
\mbox{ch}_\lambda(.)=\frac{\sum_{w\in W}\det(w)e^{(w(\lambda+\rho),.)}}{\sum_{w\in W}\det(w)e^{(w(\rho),.)}},
\end{align*}
where $\rho=2\Lambda_0+\frac{1}{2}\alpha_1$ and $W$ is the group of linear transformations  of $\mathfrak{h}^*$ generated by the reflections $s_{\alpha_0}$ and $s_{\alpha_1}$ defined by 
$$s_{\alpha_i}(x)=x-2\frac{(\alpha_i,x)}{(\alpha_i,\alpha_i)}\alpha_i, \, x\in \mathfrak{h}^*, \, i\in\{0,1\}.$$ 
As proved for instance in chapter $6$ of \cite{Kac},  the affine Weyl group W is the semi-direct product $T\ltimes W_0$ where $W_0$ is the Weyl group generated by $s_{\alpha_1}$ and $T$ is the group of transformations $t_{k}$, $k\in \Z$, defined by 
$$t_k(\lambda)=\lambda+k(\lambda,\delta)\alpha_1-(k(\lambda,\alpha_1)+k^2(\lambda,\delta))\delta, \,\, \lambda\in \mathfrak{h}^*.$$
Thus for $a\in \R^*$, $y\in \R_+^*$, and a dominant weight $\lambda$ of level $n\in \N^*$, such that $\lambda=n\Lambda_0+\frac{1}{2}x\alpha_1$, the Weyl character formula becomes 
\begin{align}\label{explicitchar1}
\mbox{ch}_\lambda(ia\alpha_1+y\Lambda_0)=\frac{\sum_{k\in \Z}\sin(a(x+1)+2ak(n+2))e^{-y(k(x+1)+k^2(n+2))}}{\sum_{k\in \Z}\sin(a+8ak)e^{-y(k+4k^2)}}.
\end{align}
Letting $a$ goes to zero in the previous identity, one also obtains that 
\begin{align}\label{explicitchar2}
\mbox{ch}_\lambda(y\Lambda_0)=\frac{\sum_{k\in \Z}(x+1+2k(n+2))e^{-y(k(x+1)+k^2(n+2))}}{\sum_{k\in \Z}(1+8k)e^{-y(k+4k^2)}},
\end{align}
for every $y\in \R_+^*$.
\section{Markov chains on the sets of integral or dominant weights}\label{Markovchains}
Let us choose for this section a dominant weight $\omega\in P_+$ and $h\in \mathfrak{h}^*_\R$ such that  $( \delta,h)>0$.
\paragraph{\bf Random walks on $P$}  We define a probability measure $\mu_\omega$ on $P$ letting 
\begin{align}\label{weightmeasure}
\mu_\omega(\beta)=\frac{\dim(V(\omega)_\beta)}{\mbox{ch}_\omega(h) }e^{\langle \beta ,h\rangle}, \quad \beta \in P.
\end{align}
If $(X(n),n \ge 0)$ is a random walk on $P$ whose increments are distributed according to $\mu_\omega$, it is important for our purpose to keep in mind that  the function 
$$x\in  \R\mapsto[\frac{\mbox{ch}_\omega(i\frac{x}{2}\alpha_1+h)}{\mbox{ch}_\omega(h)}]^n,$$
is the Fourier transform of the projection of $X(n)$ on $\R\alpha_1$.
\bigskip

\paragraph{\bf Markov chains on $P_+$} Let us consider for $\lambda\in P_+$ the following decomposition
$$\mbox{ch}_{\omega}\mbox{ch}_{\lambda}=\sum_{\beta\in P_+} m_{\lambda}(\beta)\mbox{ch}_{\beta},$$
where $m_{\lambda}(\beta)$ is the multiplicity of the module with highest weight $\beta$ in the decomposition of $V(\omega)\otimes V(\lambda)$, leads to the  definition a transition probability  $q_\omega$ on $P_+$ given by
\begin{align}\label{Markovdominant}
q_\omega(\lambda,\beta)=\frac{\mbox{ch}_\beta(h)}{\mbox{ch}_\lambda(h) \mbox{ch}_\omega(h)}m_{\lambda}(\beta),\quad \beta \in P_+.
\end{align}
Let us notice that if  $(\Lambda(n),n\ge 0)$ is a Markov process starting from $\lambda_0\in P_+$, with transition probabilities $q_\omega$ then
$$\E(\frac{\mbox{ch}_{\Lambda(n)}(ix+h)}{ch_{\Lambda(n)}(h)})=\frac{\mbox{ch}_{\lambda_0}(ix+h)}{\mbox{ch}_{\lambda_0}(h)}[\frac{\mbox{ch}_{\omega}(ix\alpha_1+h)}{ch_{\omega}(h)}]^n,$$
for every $x\in \R$. If $\lambda_1$ and $\lambda_2$ are two dominant weights  such that $\lambda_1= \lambda_2\,(mod\, \delta)$ then the irreducible modules $V(\lambda_1)$ and $V(\lambda_2)$ are isomorphic. Thus if we consider the random process $(\bar{\Lambda}(n),n\ge 0)$, where $\bar{\Lambda}(n)$ is the projection of  $\Lambda(n)$ on $vect\{\Lambda_0,\alpha_1\}$, then $(\bar{\Lambda}(n),n\ge 1)$ is a Markov process satisfying
\begin{align}\label{discrete}
\E(\frac{\mbox{ch}_{\bar\Lambda(n)}(ix\alpha_1+h)}{\mbox{ch}_{\bar\Lambda(n)}(h)})=\frac{\mbox{ch}_{\bar\lambda_0}(ix\alpha_1+h)}{\mbox{ch}_{\bar\lambda_0}(h)}[\frac{\mbox{ch}_{\omega}(ix\alpha_1+h)}{ch_{\omega}(h)}]^n,
\end{align}
for every $x\in \R$, where $\bar \lambda_0$ is the projection of $\lambda_0$  on $vect\{\Lambda_0,\alpha_1\}$.
More generally, for $n,m\in \N$, one gets
\begin{align}\label{discrete2}
\E(\frac{\mbox{ch}_{\bar\Lambda(n+m)}(ix\alpha_1+h)}{\mbox{ch}_{\bar\Lambda(n+m)}(h)}\vert \bar{\Lambda}(k),\,0\le k\le m)=\frac{\mbox{ch}_{\bar\Lambda(m)}(ix\alpha_1+h)}{\mbox{ch}_{\bar\Lambda(m)}(h)}[\frac{\mbox{ch}_{\omega}(ix\alpha_1+h)}{\mbox{ch}_{\omega}(h)}]^n,
\end{align}
for every $x\in \R$. Let us notice that if $\omega$ is a dominant weight of level $k$, and $\lambda_0$ a dominant weight of level $k_0$, then $\bar\Lambda(n)$ and $\Lambda(n)$ are  dominant weights of level  $nk+k_0$, for every $n\in \N$.

\section{A conditioned space-time Brownian motion}\label{Brownian}
\paragraph{\bf A class of space-time harmonic functions}
Considering the asymptotic of the previous characters, one obtains an interesting class of space-time harmonic functions involving the Jacobi's theta function $\theta$ defined by
$$\theta(z,\tau)=\sum_{n\in\Z} e^{\pi i n^2\tau+2\pi i n z},$$
for $z$ and $\tau$ two complex numbers, $\tau$ being in the upper half-plane. This is not surprising as the characters of affine Lie algebras  are themself a linear combination of theta functions (see \cite{Kac}).  For $a\in \R^*$, $x\in [0,t]$, if $(\lambda_n)_n$ is a sequence of dominant weights such that 
\begin{align*}
\lambda_n=[&nt]\Lambda_0+\frac{1}{2}(\lambda_n,\alpha_1)\alpha_1\\
\underset{n\to\infty}\sim& nt\Lambda_0 +nx\frac{1}{2}\alpha_1,\end{align*} 
then the sum
$$\underset{k\in\Z}\sum\sin(\frac{a}{n}((\lambda_n,\alpha_1)+1)+2\frac{a}{n}k([nt]+2))e^{-\frac{2}{n}(k((\lambda_n,\alpha_1)+1)+k^2([nt]+2))},$$
which is the numerator of $\mbox{ch}_{\lambda_n}(i\frac{a}{n}\alpha_1+\frac{2}{n}\Lambda_0)$ in the right-hand side of identity (\ref{explicitchar1}), converges, 
when $n$ goes to infinity, towards
$$\sum_{k\in \Z}\sin(ax+2kat)e^{-2(kx+k^2t)}.$$
\begin{defn} For $a\in \R^*$, we define a function $\phi_a$ on $\R\times \R^*_+$ letting 
$$\phi_a(x,t)=\frac{\pi}{\sh(a\pi)}\sum_{k\in \Z}\sin(ax+2kat)e^{-2(kx+k^2t)}, \, \,(x,t)\in \R\times \R_+^*.$$ 
\end{defn}
\noindent 
Similarly, considering the asymptotic of the numerator of $\mbox{ch}_{\lambda_n}(\frac{2}{n}\Lambda_0)$ in (\ref{explicitchar2}) leads naturally to the following definition.
\begin{defn}
We define a function $\phi_0$ on $\R\times\R^*_+$ letting 
$$\phi_0(x,t)=\sum_{k\in \Z}(x+2kt)e^{-2(kx+k^2t)}, \, \, (x,t)\in \R\times\R_+^*.$$
\end{defn}
\noindent
 Let us notice that  $\underset{a\to 0}\lim\phi_a=\phi_0$ and $$\phi_{\frac{n\pi}{t}}(x,t)=\frac{\pi}{\sh(\frac{n\pi^2}{t})}\sin(n\pi \frac{x}{t})\sum_{k\in \Z}e^{-2(kx+k^2t)},$$
for every $n\in\N^*$, $(x,t)\in   \R\times \R^*$.
\begin{prop}  For $a\in \R$, the function $$(x,t)\in \R\times \R_+^* \mapsto e^{\frac{a^2}{2}t}\phi_a(x,t),$$ is a space-time harmonic function, i.e. $\phi_a$ satisfies 
$$(\frac{1}{2}\frac{\partial^2}{\partial x^2}+\frac{\partial}{\partial t})\phi_a=-\frac{a^2}{2}\phi_a.$$ Moreover $\phi_a$ satisfies  the following boundary conditions
$$ \forall t\in \R_+^*, \,\, \left\{
    \begin{array}{ll}
      \phi_a(0,t)&=0 \\
    \phi_a(t,t)&=0.
    \end{array}
\right. $$
\end{prop}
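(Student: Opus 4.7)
The strategy is to verify everything termwise and then justify the swap of differentiation and summation via the rapid Gaussian decay of the series.

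\emph{Step 1: Termwise heat computation.} For $a\in\R$ and $k\in\Z$, set
$$f_k^a(x,t)=\sin(ax+2kat)\,e^{-2(kx+k^2t)}.$$
A direct differentiation gives
$$\tfrac{1}{2}\partial_x^2 f_k^a=\bigl[(2k^2-\tfrac{a^2}{2})\sin(ax+2kat)-2ak\cos(ax+2kat)\bigr]e^{-2(kx+k^2t)},$$
$$\partial_t f_k^a=\bigl[2ak\cos(ax+2kat)-2k^2\sin(ax+2kat)\bigr]e^{-2(kx+k^2t)},$$
so that $(\tfrac{1}{2}\partial_x^2+\partial_t)f_k^a=-\tfrac{a^2}{2}f_k^a$. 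Summing over $k$ yields the PDE for $\phi_a$ provided the series and its first two $x$--derivatives and first $t$--derivative can be differentiated termwise. For the second definition $\phi_0$, the analogous termwise identity for $g_k(x,t)=(x+2kt)e^{-2(kx+k^2t)}$ is checked in the same way.

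\emph{Step 2: Justification of termwise differentiation.} On any compact rectangle $[-R,R]\times[\varepsilon,T]\subset\R\times\R_+^*$, each of the above derivatives of $f_k^a$ (or $g_k$) is bounded in modulus by a polynomial in $k$ times $e^{2|k|R-2k^2\varepsilon}$. The Gaussian factor dominates, so the differentiated series converges normally on this rectangle. Classical theorems then allow one to differentiate under the summation, which proves that $\phi_a$ is smooth on $\R\times\R_+^*$ and satisfies $(\tfrac12\partial_x^2+\partial_t)\phi_a=-\tfrac{a^2}{2}\phi_a$. Multiplying by $e^{a^2t/2}$ removes the zeroth-order term, giving the announced space-time harmonicity.

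\emph{Step 3: Boundary conditions.} At $x=0$ the summand $f_k^a(0,t)=\sin(2kat)\,e^{-2k^2t}$ is odd under $k\leftrightarrow -k$, while the $k=0$ term vanishes; hence $\phi_a(0,t)=0$. At $x=t$,
$$f_k^a(t,t)=\sin\bigl(a(2k+1)t\bigr)\,e^{-2t\,k(k+1)}.$$
The involution $k\mapsto -k-1$ preserves $k(k+1)$ and sends $2k+1$ to its opposite, so the sum is antisymmetric under this involution and therefore vanishes: $\phi_a(t,t)=0$. For $\phi_0$ the same two involutions give the cancellation (for $x=t$ one uses $x+2kt=(2k+1)t$).

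\emph{Main difficulty.} There is no conceptual obstacle, only bookkeeping: the PDE reduces to a one-line trigonometric identity for each Fourier mode, and the boundary vanishing is a pure symmetry statement about the lattice sum. The only point deserving care is the dominated-convergence argument needed to differentiate the series termwise near $t=0$, which is handled by restricting to $t\geq\varepsilon>0$ so that the Gaussian factor $e^{-2k^2\varepsilon}$ controls every polynomial weight in $k$ arising from differentiation.
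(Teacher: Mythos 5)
Your proof is correct and follows essentially the same route as the paper: each summand is space-time harmonic, and the boundary conditions come from the involutions $k\mapsto -k$ and $k\mapsto -1-k$. The only cosmetic differences are that the paper checks harmonicity of each summand via the identity $e^{i(ax+2kat)-2(kx+k^2t)+\frac{a^2}{2}t}=e^{(ia-2k)x-\frac{1}{2}(ia-2k)^2t}$ (taking imaginary parts) instead of by direct differentiation, and it leaves implicit the justification for termwise differentiation, which you supply.
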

\begin{proof} Actually, each summand of the sum in the definition of $e^{\frac{a^2}{2}t}\phi_a$ is a space-time harmonic function because for any $k\in \Z$, one has
$$e^{i(ax+2kat)-2(kx+k^2t)+\frac{a^2}{2}t}=e^{(ia-2k)x-\frac{1}{2}(ia-2k)^2t}.$$
The first boundary condition follows from the change of variable $k\mapsto -k$, whereas the last one follows from the change of variable $k\mapsto -1-k$.   
\end{proof}
\bigskip

\paragraph{\bf Some properties of the functions $\phi_a$, $a\in \R$}
\begin{lem}\label{Poisson}
$$\sum_{k\in \Z}\sin(ax+2kta)e^{-2(kx+k^2t)}=e^{\frac{x^2}{2t}-a^2\frac{t}{2}}\sum_{k\in \Z}\sqrt{\frac{\pi}{2t}}e^{-\frac{1}{2t}k^2\pi^2}\sh(k\pi a)\sin(k\frac{\pi}{t}x)$$
$$\sum_{k\in \Z}(x+2kt)e^{-2(kx+k^2t)}=e^{\frac{x^2}{2t}}\sum_{k\in \Z}\sqrt{\frac{\pi}{2t}}e^{-\frac{1}{2t}k^2\pi^2}k\pi \sin(k\frac{\pi}{t}x)$$
\end{lem}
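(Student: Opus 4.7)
The plan is to prove both identities by Poisson summation applied to a Gaussian (times a sinusoidal or polynomial factor). Let me focus on the first identity; the second will follow by differentiation in $a$ at $a=0$.

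First I complete the square in the exponent on the left side: since
$$-2(kx+k^2t)=-2t\bigl(k+\tfrac{x}{2t}\bigr)^{2}+\tfrac{x^{2}}{2t},$$
one factors $e^{x^{2}/(2t)}$ out of the sum. Setting $u_k=k+x/(2t)$ and using $ax+2kta=a(x+2kt)=2at\,u_k$, the left side becomes
$$e^{x^{2}/(2t)}\sum_{k\in\Z}\sin(2at\,u_k)\,e^{-2t u_k^{2}}=e^{x^{2}/(2t)}\sum_{k\in\Z}f\bigl(k+\tfrac{x}{2t}\bigr),$$
where $f(u)=\sin(2atu)\,e^{-2tu^{2}}$.

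Next I apply Poisson summation in the form
$$\sum_{k\in\Z}f\bigl(k+c\bigr)=\sum_{n\in\Z}\hat f(n)\,e^{2\pi i nc},\qquad \hat f(\xi)=\int_{\R}f(u)\,e^{-2\pi i u\xi}\,du,$$
with $c=x/(2t)$. Writing $\sin(2atu)=\tfrac{1}{2i}(e^{2iatu}-e^{-2iatu})$ and using the standard Gaussian integral $\int_{\R}e^{-2tu^{2}+\beta u}\,du=\sqrt{\pi/(2t)}\,e^{\beta^{2}/(8t)}$, a short calculation gives
$$\hat f(\xi)=\sqrt{\tfrac{\pi}{2t}}\,e^{-a^{2}t/2}\,e^{-\pi^{2}\xi^{2}/(2t)}\,\tfrac{1}{i}\sh(a\pi\xi).$$
Summing the Poisson series, the $n=0$ term vanishes because $\sh(0)=0$, and grouping the terms $n$ and $-n$ converts $\frac{1}{i}\sh(a\pi n)\,e^{i\pi n x/t}$ and its partner into $2\sh(a\pi n)\sin(\pi n x/t)/1$, so after restoring the full sum over $n\in\Z$ one obtains
$$\sum_{k\in\Z}f(k+c)=\sqrt{\tfrac{\pi}{2t}}\,e^{-a^{2}t/2}\sum_{n\in\Z}e^{-\pi^{2}n^{2}/(2t)}\sh(a\pi n)\sin\bigl(\tfrac{n\pi x}{t}\bigr).$$
Multiplying by $e^{x^{2}/(2t)}$ yields the first identity.

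For the second identity, I would differentiate the first in $a$ at $a=0$. On the left, $\partial_{a}\sin(ax+2kta)|_{a=0}=x+2kt$, giving exactly $\sum_{k}(x+2kt)e^{-2(kx+k^{2}t)}$. On the right, $\partial_{a}(e^{-a^{2}t/2}\sh(a\pi n))|_{a=0}=\pi n$, so one obtains precisely the claimed formula. (One has to justify interchanging $\partial_{a}$ with the sum, but the bounds $|\sh(a\pi n)|\le e^{|a|\pi |n|}$ combined with the Gaussian factor $e^{-\pi^{2}n^{2}/(2t)}$ give locally uniform convergence in $a$, which suffices; the same Gaussian decay controls the left-hand side.) The only delicate point in the whole proof is keeping track of signs and of the factor $1/i$ when pairing $n$ with $-n$; everything else is routine manipulation of Gaussian integrals.
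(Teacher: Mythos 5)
Your proof is correct and follows essentially the same route as the paper: both identities are instances of the Poisson summation formula applied to a Gaussian after completing the square in the exponent, which is exactly what the paper does (phrased there as computing the Fourier coefficients of the $2t$-periodic function $x\mapsto e^{-x^2/(2t)}\phi_a(x,t)$). Your derivation of the second identity by differentiating the first in $a$ at $a=0$ is a harmless variant of the paper's ``follows similarly'' (the paper's alternative argument likewise differentiates a theta identity, only in $x$ rather than $a$), and your justification of the interchange of limit and sum via the Gaussian decay is adequate.
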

\begin{proof}
As $\sin(ax+2kta)e^{-2(kx+k^2t)}=e^{\frac{x^2}{2t}}\sin(a(x+2kt))e^{-\frac{1}{2t}(x+2kt)^2}$, the first identity follows from a Poisson summation formula, which is obtained computing    the Fourier coefficients of the $2t$-periodic function $x\mapsto e^{-\frac{x^2}{2t}}\phi_a(x,t)$. The second identity follows similarly from the identity  $(x+2kt)e^{-2(kx+k^2t)}=e^{\frac{x^2}{2t}}(x+2kt)e^{-\frac{1}{2t}(x+2kt)^2}$. Let us notice that the second identity can  also be derived from the well known Jacobi's theta function identity
\begin{align}\label{Jacobitheta}
\frac{1}{\sqrt{\pi t}}\sum_{n\in \Z}e^{-\frac{1}{t}(n+x)^2}=\sum_{n\in \Z} \cos(2n\pi x)e^{-n^2\pi^2t},
\end{align}
which is valid for $x\in \R, \, t\in \R_+^*,$ and which is also a particular case of the Poisson summation formula  (see \cite{Bellman}).  Considering the partial derivative with respect to $x$  of the left and the right hand sides  in identity (\ref{Jacobitheta}) leads to the identity 
\begin{align}\label{Jacobithetader}
\frac{1}{\sqrt{\pi t}}\sum_{n\in \Z}\frac{1}{t}(n+x)e^{-\frac{1}{t}(n+x)^2}=\sum_{n\in \Z}n\pi \sin(2n\pi x)e^{-n^2\pi^2t},
\end{align}
for $x\in \R, \, t\in \R_+^*.$ As 
$$\phi_0(x,t)=\sum_{n\in \Z}2t(\frac{x}{2t}+n)e^{-2t(n+\frac{x}{2t})^2+\frac{x^2}{2t}},$$
one obtains the second replacing respectively $t$ and $x$  by $\frac{1}{2t}$ and $\frac{x}{2t}$ in (\ref{Jacobithetader}).
\end{proof}  

\begin{lem} Let $t\in \R_+^*$, and $x\in]0,t[$. If   $(\lambda_n)_n$ is a sequence of dominant weights such that 
$$\lambda_n\sim nt\Lambda_0 +nx\frac{1}{2}\alpha,$$ then 
$$\underset{n\to \infty}\lim\frac{\mbox{ch}_{\lambda_n}(\frac{ia}{n}\alpha_1+\frac{2}{n}\Lambda_0)}{\mbox{ch}_{\lambda_n}(\frac{2}{n}\Lambda_0)}=\frac{\phi_a(x,t)}{\phi_0(x,t)}.$$
\end{lem}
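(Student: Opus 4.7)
The plan is to substitute the explicit Weyl character formulas (\ref{explicitchar1}) and (\ref{explicitchar2}) with $a \mapsto a/n$ and $y \mapsto 2/n$, and analyze each of the four resulting sums separately. Writing $\lambda_n = n_\lambda\Lambda_0 + \frac{1}{2}x_\lambda\alpha_1$ with $x_\lambda/n \to x$ and $n_\lambda/n \to t$, denote by $N_1$, $D_1$ the numerator and denominator appearing in (\ref{explicitchar1}), and by $N_2$, $D_2$ the numerator and denominator appearing in (\ref{explicitchar2}), all evaluated at our rescaled arguments. The ratio in question is then $(N_1 D_2)/(D_1 N_2)$, and it suffices to identify the asymptotics of each of the four factors.

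For the $\lambda_n$-dependent pieces, dominated convergence applies because the quadratic part $\frac{2}{n}k^2(n_\lambda+2)$ tends to $2k^2 t$ and provides gaussian decay in $k$ uniformly in $n$ (the linear part is absorbed into a shift of the gaussian, using $x \in (0,t)$ and $t > 0$). Termwise convergence gives
$$N_1 \longrightarrow \sum_{k\in\Z}\sin(ax+2kat)e^{-2(kx+k^2t)} = \frac{\sh(a\pi)}{\pi}\phi_a(x,t).$$
For $N_2$, whose summands grow linearly in $k$, I rewrite $N_2/n = \sum_k (X+2kT)e^{-2(kX+k^2T)}$ with $X = (x_\lambda+1)/n$ and $T = (n_\lambda+2)/n$, apply the second Poisson-type identity of Lemma~\ref{Poisson}, pass to the limit $X\to x$, $T\to t$ in the transformed series (whose summands have uniform gaussian decay), and then reverse the Poisson identity at the limit to conclude $N_2/n \to \phi_0(x,t)$.

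The main obstacle is the two Weyl denominators, since for each fixed $k$ the summand of $D_1$ tends to $0$ while that of $D_2$ does not, so a naive termwise analysis fails. Here Lemma~\ref{Poisson} is essential: applied with matching parameters $x' = 1/n$, $t' = 4/n$, $a' = a$, both sums are transformed into series whose summands decay like $e^{-k^2\pi^2 n/8}$. The factor $\sin(k\pi/4)$ vanishes precisely for $k \in 4\Z$, and by symmetry in $k \mapsto -k$ the leading contribution in each transformed sum comes from $k = \pm 1$, yielding
$$D_1 \sim \sqrt{n\pi/4}\,\sh(a\pi)\,e^{-\pi^2 n/8}, \qquad D_2 \sim n\sqrt{n\pi/4}\,\pi\,e^{-\pi^2 n/8},$$
so that $D_2/D_1 \sim n\pi/\sh(a\pi)$. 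Assembling the four asymptotics, the factors of $n$ and of $\sh(a\pi)/\pi$ cancel exactly, producing
$$\frac{N_1 D_2}{D_1 N_2} = \frac{N_1}{N_2/n}\cdot\frac{D_2/D_1}{n}\longrightarrow \frac{(\sh(a\pi)/\pi)\phi_a(x,t)}{\phi_0(x,t)}\cdot\frac{\pi}{\sh(a\pi)} = \frac{\phi_a(x,t)}{\phi_0(x,t)}.$$
The delicate ingredient throughout is the dominated-convergence justification controlling the tails of the Poisson-transformed series, for which the hypothesis $0 < x < t$ is crucial.
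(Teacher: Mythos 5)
Your proof is correct and takes essentially the same route as the paper: the paper's own (very terse) proof consists precisely of the denominator asymptotics $\lim_n D_1/(D_2/n)=\sh(a\pi)/\pi$ extracted from Lemma \ref{Poisson} at the point $(1/n,4/n)$, combined with the termwise convergence of the numerators already noted in the discussion preceding the definition of $\phi_a$. You have simply made explicit the details the paper leaves implicit, namely the $k=\pm 1$ dominant-term analysis after Poisson summation and the dominated-convergence bounds for the numerators.
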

\begin{proof} Lemma \ref{Poisson}   implies   that
$$\underset{n\to\infty}\lim\frac{\underset{k\in \Z}\sum\sin(\frac{a}{n}(1+8k))e^{-\frac{2}{n}(k+4k^2)}}{\frac{1}{n}\underset{k\in \Z}\sum(1+8k)e^{-\frac{2}{n}(k+4k^2)}}=\frac{\sh(a\pi)}{\pi}.$$
Thus the lemma follows from identities (\ref{explicitchar1}) and (\ref{explicitchar2}).
\end{proof}
\begin{prop} \label{propphi} Let $a\in \R^*$, and $t\in \R_+^*$. Then
\begin{enumerate} 
\item The function $\phi_0(.,t)$ is $C^\infty$ on $[0,t]$,
\item the function $\frac{\phi_a(.,t)}{\phi_0(.,t)}$ is bounded on $[0,t]$,
\item $\forall x\in]0,t[$,   $\phi_0(x,t)\ne 0$,
\item the function $\phi_0(.,t)$ doesn't change of sign on $[0,t]$.
\end{enumerate}
\end{prop}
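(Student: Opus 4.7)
The starting point is the Poisson--summation identity of Lemma~\ref{Poisson}, which rewrites both functions as exponentially convergent trigonometric series:
\begin{equation*}
\phi_0(x,t) = e^{x^2/(2t)}\sqrt{\pi/(2t)}\sum_{k\in\Z} k\pi\, e^{-k^2\pi^2/(2t)}\sin(k\pi x/t),
\end{equation*}
and an analogous expression for $\phi_a$. For (1), the $k$--th coefficient decays as $e^{-k^2\pi^2/(2t)}$, which dominates any polynomial in $k$; termwise differentiation in $x$ any number of times yields a series converging absolutely and uniformly on $\R$, so $\phi_0(\cdot,t)\in C^\infty(\R)$, and in particular on $[0,t]$.

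The main work is (3). Set $q := e^{-\pi^2/(2t)}\in(0,1)$ and $y := x/t\in (0,1)$. Modulo the strictly positive prefactor $2\pi e^{x^2/(2t)}\sqrt{\pi/(2t)}$, $\phi_0(x,t)$ is proportional to
\begin{equation*}
S(y) := \sum_{k\geq 1} k\,q^{k^2}\sin(k\pi y).
\end{equation*}
Relate $S(y)$ to the Jacobi theta function defined in the paper, $\theta(z,\tau)=\sum_n e^{i\pi n^2\tau+2\pi inz}$, via the identity $S(y)=-\tfrac{1}{4\pi}\partial_z\theta(z,i\pi/(2t))\big|_{z=y/2}$. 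Then invoke the Jacobi triple product, which for $\tau=i\pi/(2t)$ and real $z$ takes the form
\begin{equation*}
\theta(z,i\pi/(2t)) = \prod_{n\geq 1}(1-q^{2n})\bigl(1+2q^{2n-1}\cos(2\pi z)+q^{4n-2}\bigr),
\end{equation*}
with each factor bounded below by $(1-q^{2n-1})^2>0$. Taking the logarithmic derivative in $z$,
\begin{equation*}
\partial_z\log\theta = -\sum_{n\geq 1}\frac{4\pi q^{2n-1}\sin(2\pi z)}{1+2q^{2n-1}\cos(2\pi z)+q^{4n-2}},
\end{equation*}
which is strictly negative on $z\in(0,1/2)$ because $\sin(2\pi z)>0$ there. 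Hence $\partial_z\theta<0$ on $(0,1/2)$, so $S(y)>0$ for $y\in(0,1)$, giving (3). Property (4) is then immediate from continuity and the boundary values $\phi_0(0,t)=\phi_0(t,t)=0$.

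For (2), (1) and (3) together already give that $\phi_a/\phi_0$ is $C^\infty$ on the open interval $(0,t)$; boundedness on $[0,t]$ reduces to establishing finite limits at the endpoints. Near $x=0$, expanding $\sin(k\pi x/t)\sim k\pi x/t$ in the Poisson series shows $\phi_0(x,t)\sim C_0(t)\,x$ with $C_0(t)>0$, and $\phi_a(x,t)\sim C_a(t)\,x$ with $C_a(t)$ finite. Near $x=t$, using $\sin(k\pi x/t)=(-1)^{k+1}\sin(k\pi(t-x)/t)$, a parallel expansion gives linear vanishing in $(t-x)$; the leading coefficient for $\phi_0$ is a positive multiple of $\sum_{k\geq1}(-1)^{k+1}k^2q^{k^2}$, equal to $-\tfrac{1}{8\pi^2}\partial_z^2\theta(1/2,i\pi/(2t))$, which is strictly positive because (3) shows $\theta(\cdot,i\pi/(2t))$ is strictly decreasing on $(0,1/2)$ and thus has a strict local minimum at $z=1/2$. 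Hence $\phi_a/\phi_0$ extends continuously to $[0,t]$ and is bounded.

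\textbf{Main obstacle.} The crux is (3). The signs of the terms $k q^{k^2}\sin(k\pi y)$ oscillate with $k$, so positivity cannot be read off the series term by term; one really needs the rigid multiplicative structure of Jacobi's theta function to convert the mixed--sign sum into a uniformly signed expression through its logarithmic derivative.
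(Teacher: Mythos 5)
Your route is genuinely different from the paper's: the order of deduction is reversed. The paper proves (2) first, by representation theory -- the ratio $\mbox{ch}_\lambda(ir\alpha_1+y\Lambda_0)/\mbox{ch}_\lambda(y\Lambda_0)$ is the Fourier transform of a probability measure, hence of modulus at most $1$, and passing to the limit along a sequence of dominant weights gives $\vert\phi_a/\phi_0\vert\le 1$ on $]0,t[$ -- and then deduces (3) from (2) applied to $a=\pi/t$, using the closed form $\phi_{\pi/t}(x,t)=c\,\sin(\pi x/t)\sum_k e^{-2(kx+k^2t)}$, which vanishes on $[0,t]$ only at the endpoints, so that boundedness of $\phi_{\pi/t}/\phi_0$ forbids an interior zero of $\phi_0$. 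You instead prove (3) first, via the Jacobi triple product and the sign of $\partial_z\log\theta$, and then try to get (2) from local analysis at the endpoints. Your argument for (3) is correct and in fact stronger than the paper's, since it identifies the sign ($\phi_0>0$ on $]0,t[$). What the paper's order buys is the explicit bound $\vert\phi_a/\phi_0\vert\le 1$, essentially for free; your version of (2) yields only boundedness by an unspecified constant, which still suffices for the statement.

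However, your proof of (2) has a concrete gap at the endpoint $x=t$. Writing $q=e^{-\pi^2/(2t)}$, you need the coefficient of linear vanishing of $\phi_0$ at $x=t$, a positive multiple of $\sum_{k\ge 1}(-1)^{k+1}k^2q^{k^2}$, to be nonzero. First, the identity has a sign slip: since $\theta(z,i\pi/(2t))=1+2\sum_{n\ge1}q^{n^2}\cos(2\pi nz)$, one gets $\partial_z^2\theta(1/2)=+8\pi^2\sum_{n\ge1}(-1)^{n+1}n^2q^{n^2}$, not its negative. More seriously, the inference ``strictly decreasing on $(0,1/2)$, hence strict local minimum at $z=1/2$, hence the second derivative there is strictly positive'' is invalid: a strict local minimum only gives $\partial_z^2\theta(1/2)\ge 0$ (consider $z\mapsto (z-1/2)^4$). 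The repair stays inside your own framework: at $z=1/2$ the triple product gives $\partial_z\log\theta(1/2)=0$ and $\partial_z^2\log\theta(1/2)=\sum_{n\ge1}8\pi^2q^{2n-1}/(1-q^{2n-1})^2>0$, while $\theta(1/2)=\prod_{n\ge1}(1-q^{2n})(1-q^{2n-1})^2>0$, so $\partial_z^2\theta(1/2)=\theta(1/2)\,\partial_z^2\log\theta(1/2)>0$. With that fix, your endpoint analysis (at most linear vanishing of $\phi_a$, exactly linear vanishing of $\phi_0$) does give (2), and the rest of your argument -- (1) by termwise differentiation of the rapidly convergent Poisson-summed series, (4) from (3) and continuity -- is sound.
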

\begin{proof} The first property   follows immediately from a dominated convergence theorem. As for any $r\in\R$ and $y\in \R_+^*$,  $\frac{\mbox{ch}_\lambda(ir\alpha_1+y\Lambda_0)}{\mbox{ch}_\lambda(y\Lambda_0)}$ is a Fourier transform of a probability measure, it is bounded by $1$. Previous lemma implies that 
$$\forall x\in ]0,t[,\quad \vert\frac{\phi_a(x,t)}{\phi_0(x,t)}\vert \le 1.$$ As the function $\frac{\phi_a(.,t)}{\phi_0(.,t)}$ is easily shown to be continuous on $[0,t]$, the second property follows. For the third property, we notice that the function $\phi_{\frac{\pi}{t}}$ is defined by
$$\phi_{\frac{\pi}{t}}(x,t)=2t\sin(\frac{x}{t}\pi)\sum_{k\in \Z}e^{-2(kx+k^2t)}, \,\, (x,t)\in \R\times\R_+^*.$$ Thus   for $x\in [0,t]$, 
$$\phi_{\frac{\pi}{t}}(x,t)=0\Leftrightarrow  x\in\{0,t\}.$$
\noindent
The function $\frac{\phi_{\frac{\pi}{t}}(.,t)}{\phi_0(.,t)}$ being bounded on $[0,t]$, the third property follows. The fourth one is an immediate consequence of the first and the third ones.
\end{proof} 
Let us enounce a very classical result on Fourier series that will be used   to prove  proposition \ref{characterization}
 \begin{lem}\label{Fourier} Let $t$ be a positive real number and
  $f:[0,t]\to \R$ be a function such  that $f(0)=f(t)=0$, which is $C^3$ on $[0,t]$. Then  letting $c_n=\frac{1}{t}\int_0^t\sin(\frac{z}{t}n\pi)f(z)\,dz$, $n\in \N$, 
the series $\sum_nnc_n$ converges absolutely and
$$f(x)=\sum_{n=1}^{+\infty}c_n \sin(\frac{x}{t}n\pi),$$  for every $x\in [0,t]$.
\end{lem}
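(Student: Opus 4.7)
The plan is to prove Lemma \ref{Fourier} by the standard route of odd/periodic extension followed by integration by parts, with the mild obstacle being the pointwise identification of the Fourier sum with $f$.

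First I would extend $f$ to all of $\R$ by odd reflection: set $\tilde f(-x)=-f(x)$ on $[-t,0]$ and extend $2t$-periodically. The boundary conditions $f(0)=f(t)=0$ ensure that $\tilde f$ is continuous on $\R$, and the smoothness of $f$ on $[0,t]$ guarantees that $\tilde f$ is piecewise $C^3$ (regularity at the gluing points $0$ and $t$ may fail beyond continuity, but this is all I will need). Since $\tilde f$ is odd and $2t$-periodic, its full Fourier series reduces to the sine series $\sum_{n\ge 1} c_n\sin(n\pi x/t)$ with coefficients exactly the $c_n$ defined in the statement.

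Next I would integrate by parts three times in
$$c_n=\frac{1}{t}\int_0^t \sin(n\pi z/t)\,f(z)\,dz.$$
The first integration by parts uses $f(0)=f(t)=0$ to kill the boundary term and yields $c_n=\frac{1}{n\pi}\int_0^t\cos(n\pi z/t)\,f'(z)\,dz$. The second integration by parts produces no boundary term (the sine vanishes at $0$ and $t$) and gives $c_n=-\frac{t}{(n\pi)^2}\int_0^t\sin(n\pi z/t)\,f''(z)\,dz$. A third integration by parts then gives
$$c_n=\frac{t^2}{(n\pi)^3}\bigl[(-1)^n f''(t)-f''(0)\bigr]-\frac{t^2}{(n\pi)^3}\int_0^t\cos(n\pi z/t)\,f'''(z)\,dz.$$
Since $f\in C^3([0,t])$, there is a constant $C$ depending only on $\|f''\|_\infty$ and $\|f'''\|_\infty$ such that $|c_n|\le C/n^3$. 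Consequently $\sum_n n|c_n|\le C\sum_n 1/n^2<\infty$, which is the first claim of the lemma and moreover makes the series $\sum_n c_n\sin(n\pi x/t)$ normally convergent on $[0,t]$, so its sum $g$ is continuous.

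The remaining step, the one I regard as the only real point of care, is to identify $g$ with $f$ pointwise on $[0,t]$. I would do this via the $L^2$ theory: the family $\{\sqrt{2/t}\sin(n\pi x/t)\}_{n\ge 1}$ is a Hilbert basis of $L^2([0,t])$, so the partial sums of $\sum_n c_n\sin(n\pi\cdot/t)$ converge to $f$ in $L^2([0,t])$. Since they also converge uniformly to $g$ by the previous paragraph, and uniform convergence implies $L^2$ convergence, the two limits must coincide, whence $f(x)=g(x)$ for almost every $x$, and then for every $x\in[0,t]$ by continuity of both sides. This completes the proof.
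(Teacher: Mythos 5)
The paper offers no proof of this lemma at all---it is introduced as ``a very classical result on Fourier series'' and used as a black box---so there is nothing to compare your argument against except the standard one, which is what you give. Structurally your proof is sound: the three integrations by parts are carried out correctly (the boundary terms vanish exactly where you say they do), the resulting bound $|c_n|\le C n^{-3}$ does yield absolute convergence of $\sum_n n c_n$ and normal convergence of the sine series, and closing the pointwise identification by playing uniform convergence against $L^2$ convergence in the Hilbert basis is a legitimate and clean way to avoid invoking a pointwise convergence theorem for the periodic extension. (Incidentally, the first paragraph about the odd $2t$-periodic extension is not actually used by the rest of your argument, since the identification step works entirely inside $L^2([0,t])$.)

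There is, however, a normalization error, which you have inherited from the paper's statement and which a careful execution of your own plan would have exposed. Since $\int_0^t\sin^2(n\pi z/t)\,dz=t/2$, the orthonormal basis is $\sqrt{2/t}\,\sin(n\pi\cdot/t)$ and the expansion of $f$ reads $f=\sum_{n\ge 1}2c_n\sin(n\pi\cdot/t)$ with the $c_n$ of the statement; equivalently, the Fourier sine coefficient of the odd $2t$-periodic extension is $\frac1t\int_{-t}^{t}\tilde f(z)\sin(n\pi z/t)\,dz=2c_n$, not $c_n$. (Test case: $t=\pi$, $f(x)=\sin x$ gives $c_1=\tfrac12$.) So your two assertions ``with coefficients exactly the $c_n$ defined in the statement'' and ``the partial sums of $\sum_n c_n\sin(n\pi\cdot/t)$ converge to $f$ in $L^2([0,t])$'' are both off by this factor, and the argument, done correctly, proves $f(x)=2\sum_{n\ge1}c_n\sin(n\pi x/t)$ rather than the displayed identity. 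The slip is already present in the paper's Lemma \ref{Fourier} and is harmless for its use in Proposition \ref{characterization}, which only needs the integrals $\int\phi_{n\pi/t}/\phi_0\,d\mu$ to determine $\mu$; but since your method is precisely the one that detects the constant, you should either correct $c_n$ to $\frac2t\int_0^t\sin(n\pi z/t)f(z)\,dz$ or insert the factor $2$ in the conclusion. Everything else in the proposal is correct.
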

\begin{prop} \label{characterization} Let $t$ be a positive real number and  $\mu$ be a probability measure on $[0,t]$.  
 Then  $\mu$ is characterized by the quantities
$$\int_0^t\frac{\phi_{n\pi/t}(x,t)}{\phi_0(x,t)}\,\mu(dx), \,n\in \N.$$
\end{prop}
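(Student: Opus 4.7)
The plan is to test $\mu$ against a dense subset of $C([0,t])$ built from the prescribed moments. Using the formula $\phi_{n\pi/t}(x,t)=\frac{\pi}{\sh(n\pi^2/t)}\sin(n\pi x/t)\,\psi(x,t)$ with $\psi(x,t):=\sum_{k\in\Z}e^{-2(kx+k^2t)}$, valid for $n\geq 1$, and setting $h(x):=\psi(x,t)/\phi_0(x,t)$, the hypothesis that two probability measures $\mu,\mu'$ share the same moments rewrites, for the signed measure $\nu:=\mu-\mu'$, as
\[
\int_0^t \sin(n\pi x/t)\,h(x)\,d\nu(x)=0,\qquad n\geq 1.
\]
The goal becomes to upgrade these vanishing sine--moments to $\int_0^t G\,d\nu=0$ for every $G\in C([0,t])$, which by the Riesz representation theorem forces $\nu=0$.

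Given $F\in C^3([0,t])$ with $F(0)=F(t)=0$, Lemma \ref{Fourier} provides $F(x)=\sum_{n\geq 1} c_n\sin(n\pi x/t)$ with $\sum_{n\geq 1} n|c_n|<\infty$. Multiplying by $h$ and integrating against $\nu$ gives
\[
\int_0^t F(x)h(x)\,d\nu(x) = \sum_{n\geq 1} c_n \int_0^t \sin(n\pi x/t)\,h(x)\,d\nu(x) = 0.
\]
I would justify the interchange of sum and integral via the elementary bound $|\sin(n\theta)|\leq n\sin\theta$ on $[0,\pi]$, which yields the domination $|c_n\sin(n\pi x/t)h(x)|\leq n|c_n|\sin(\pi x/t)h(x)$; Proposition \ref{propphi}(2) applied with $a=\pi/t$ shows that $\sin(\pi x/t)h(x)$ is a constant multiple of $\phi_{\pi/t}(\cdot,t)/\phi_0(\cdot,t)$, hence bounded on $[0,t]$, and since $\sum n|c_n|<\infty$ with $|\nu|$ finite, dominated convergence applies.

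It remains to show that $F\cdot h$ covers a dense subset of $C([0,t])$ as $F$ varies. Given $G\in C^\infty([0,t])$, I set $F:=G\cdot\phi_0/\psi$. By Proposition \ref{propphi}(1), $\phi_0(\cdot,t)$ is $C^\infty$ on $[0,t]$; $\psi(\cdot,t)$ is $C^\infty$ and strictly positive on $[0,t]$ as an absolutely convergent sum of positive exponentials; and $\phi_0(\cdot,t)$ vanishes at $x=0$ and at $x=t$ via the symmetries $k\mapsto -k$ and $k\mapsto -1-k$ in its defining series. Hence $F\in C^\infty([0,t])$ with $F(0)=F(t)=0$ and $F\cdot h=G$. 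The previous step then gives $\int_0^t G\,d\nu=0$ for every $G\in C^\infty([0,t])$, and density of $C^\infty([0,t])$ in $C([0,t])$ finishes the proof.

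The main technical obstacle is the dominated-convergence step, delicate because $h$ has a simple pole at each endpoint; its resolution hinges on the boundedness part of Proposition \ref{propphi}(2), which precisely counterbalances this singularity with the factor $\sin(\pi x/t)$.
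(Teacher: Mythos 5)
Your proof is correct and follows essentially the same route as the paper's: both hinge on writing a test function $u$ as $\bigl(u\,\phi_0/e\bigr)\cdot\bigl(e/\phi_0\bigr)$ with $e(x,t)=\sum_{k\in\Z}e^{-2(kx+k^2t)}$, expanding the first factor in a sine series via Lemma \ref{Fourier}, identifying $\sin(n\pi x/t)\,e/\phi_0$ as a constant multiple of $\phi_{n\pi/t}/\phi_0$, and interchanging sum and integral using $\sum n|c_n|<\infty$. Your explicit domination bound $|\sin(n\theta)|\le n\sin\theta$ combined with Proposition \ref{propphi}(2) is a slightly more careful justification of the interchange than the paper's one-line remark, but the argument is the same.
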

\begin{proof}   For $t\in \R_+^*$, $x\in \R$, we let 
$$e(x,t)=\underset{k\in\Z}\sum e^{-2(kx+k^2t)}.$$ 
Let $u$ be a   $C^3$ function on  $[0,t]$. We first notice that the function $\frac{u(.)\phi_0(.,t)}{e(.,t)}$ satisfies the condition of lemma \ref{Fourier}. We let for $n\in \N^*$, 
$$c_n=\frac{1}{t}\int_0^t \frac{u(x)\phi_0(x,t)}{e(x,t)}\sin(\frac{x}{t}n\pi)\, dx.$$ One has
\begin{align*}
\int_0^tu(x)\, \mu(dx)&=\int_0^tu(x)\frac{\phi_0(x,t)}{e(x,t)}\frac{e(x,t)}{\phi_0(x,t)}\, \mu(dx)\\
&=\int_0^t\sum_{n=1}^{+\infty}c_n\sin(\frac{x}{t}n\pi)\frac{e(x,t)}{\phi_0(x,t)}\, \mu(dx)
\end{align*}
Using the two identities of lemma \ref{Poisson} one obtains
\begin{align*}
\int_0^tu(x)\, \mu(dx)&=\int_0^t\sum_{n=1}^{+\infty}c_n\frac{\sum_{k\in \Z }e^{-\frac{\pi^2}{2t}k^2}\sin((n+k)\pi\frac{x}{t})}{\sum_{k\in \Z}e^{-\frac{\pi^2}{2t}k^2}k\pi\pi\sin(k\pi\frac{x}{t})}\,\mu(dx)\\
&=\sum_{n=1}^{+\infty}c_n\int_0^t\frac{\sum_{k\in \Z}e^{-\frac{\pi^2}{2t}k^2}\sin((n+k)\pi\frac{x}{t})}{\sum_{k\in \Z }e^{-\frac{\pi^2}{2t}k^2}k\pi\pi\sin(k\pi\frac{x}{t})}\,\mu(dx)
\end{align*}
where the last identity follows from the fact that the series $\sum nc_n$ is absolutely convergent.  Thus 
\begin{align*}
\int_0^tu(x)\, \mu(dx)&=\sum_{n=1}^{+\infty}\frac{c_n}{2}\frac{\sh(n\frac{\pi^2}{t})}{\pi}\int_0^t\frac{\phi_{n\pi/t}(x,t)}{\phi_0(x,t)}\,\mu(dx).
\end{align*}
\end{proof}
\bigskip

\paragraph{{\bf A conditioned space-time Brownian motion}} 
Let us denote by $C$ the fundamental Weyl chamber defined by
$$C=\{x\in \mathfrak{h}^* : (x,\alpha_i)\ge 0, i\in\{0,1\}\}.$$ 
That is, an element of $C$ can be written 
$$t\Lambda_0+\frac{x}{2}\alpha_1+y\delta,$$
where $t\in \R_+, x\in[0,t], y\in \C.$
 For $x\in \R$, we denote by $\mathbb{W}_x$ the Wiener measure on the set $C(\R_+)$ of real valued continuous functions on $\R_+$, under which the coordinate process $(X_t,t\ge 0)$ is a Brownian motion starting from $x$, and denote the natural filtration of $(X_t)_{t\ge 0}$ by $(\mathcal{F}_t)_{t\ge 0}$. One considers the stopping times $T_u$, $u\in \R_+$,  defined by $$T_u=\inf\{t\ge 0: X_t=0 \textrm{ or } X_t=t+u\}.$$ 
Proposition  \ref{propphi} ensures that $\phi_0(X_s,s+u)$ doesn't change of sign whenever $s\in[0,T_u]$. Let $x$ be a positive real number such that $u>x$. One has $\mathbb{W}_x(T_u>0)=1$. The function $\phi_0$ being space-time harmonic, the process $\phi_0(X_t,t+u)$ is a local martingale. Actually, each summand of the sum is a local martingale, for which the quadratic variation is easily shown to be integrable, so that, each summand is a true Martingale. As their sum converges absolutely in $L_2$ norm, one obtains that $\phi_0(X_t,t+u),t\ge 0,$ is a true martingale. As $\phi(X_{T_u},T_u+u)=0$, one defines a measure $\mathbb{Q}_{x,u}$ on $C(\R_+)$ letting 
$$\mathbb{Q}_{x,u}(A)=\E_x(\frac{\phi_0(X_t,t+u)}{\phi_0(x,u)}1_{\{T_u>t\}\cap A}), \, A\in \mathcal{F}_t.$$ 
Let $r$ and $s$ be two positive real numbers  such that $r<t$. Using that  $$T_u=T_{u+r}\circ\theta_r+r\, \textrm{ on } \{T_u\ge r\},$$ where $\theta_r$ the shift operator defined by
$$\forall t\in \R_+^*, \, X_t\circ \theta_r=X_{t+r},$$ one easily proves that $(X_t,t\ge 0)$ is an inhomogeneous Markov process under $\mathbb{Q}_{x,u}$ satisfying 
\begin{align}\label{Markovkernel}
\E_{\mathbb{Q}_{x,u}}(f(X_{t+r})\vert \mathcal{F}_r)&=\E_{X_r}(\frac{\phi_0(X_t,t+r+u)}{\phi_0(X_r,r+u)}f(X_t)1_{T_{u+r}>t})\nonumber\\
&=\E_{\mathbb{Q}_{X_r,r+u}}(f(X_t)),
\end{align}
for any real valued measurable bounded function $f$.
\begin{prop}\label{phiQ} For $r,t,u\in \R_+^*$, $x\in ]0,u[$, and $a\in \R$, one has 
\begin{align}\label{continuous}
\mathbb{Q}_{x,u}(\frac{\phi_a(X_t,t+u)}{\phi_0(X_t,t+u)})=\frac{\phi_a(x,u)}{\phi_0(x,u)}e^{-\frac{a^2}{2}t}.
\end{align}
and
\begin{align}\label{continuouscond}
E_{\mathbb{Q}_{x,u}}(\frac{\phi_a(X_{t+r},t+r+u)}{\phi_0(X_{t+r},t+r+u)}\vert \mathcal{F}_r)=\frac{\phi_a(X_r,r+u)}{\phi_0(X_r,r+u)}e^{-\frac{a^2}{2}t}.
\end{align}
\end{prop}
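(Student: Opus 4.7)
The plan is to derive (\ref{continuous}) first by a straightforward application of optional stopping to the space-time harmonic function $e^{\frac{a^2}{2}(\cdot)}\phi_a$, and then to obtain (\ref{continuouscond}) as an immediate corollary via the inhomogeneous Markov property (\ref{Markovkernel}).

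First I would unwind the definition of $\mathbb{Q}_{x,u}$ to reduce the claim to a statement under the Wiener measure:
\begin{align*}
\mathbb{E}_{\mathbb{Q}_{x,u}}\!\left[\frac{\phi_a(X_t,t+u)}{\phi_0(X_t,t+u)}\right]
=\frac{1}{\phi_0(x,u)}\,\mathbb{E}_x\!\left[\phi_a(X_t,t+u)\,1_{\{T_u>t\}}\right].
\end{align*}
So the core task is to show that $\mathbb{E}_x[\phi_a(X_t,t+u)\,1_{\{T_u>t\}}]=\phi_a(x,u)\,e^{-a^2 t/2}$. By the space-time harmonicity proved earlier, the process $M_s:=e^{\frac{a^2}{2}s}\phi_a(X_s,s+u)$ is a local martingale under $\mathbb{W}_x$; the very same argument already used for $\phi_0$ (each summand of the defining series is a true martingale with integrable quadratic variation, and the series converges absolutely in $L^2$ on bounded time intervals) upgrades $M_s$ to a true martingale. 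I would then apply optional stopping at $T_u\wedge t$:
$$\phi_a(x,u)=\mathbb{E}_x\!\left[e^{\frac{a^2}{2}(T_u\wedge t)}\phi_a(X_{T_u\wedge t},(T_u\wedge t)+u)\right].$$
On $\{T_u\le t\}$ we have either $X_{T_u}=0$ or $X_{T_u}=T_u+u$, and in both cases the boundary conditions of the preceding proposition give $\phi_a(X_{T_u},T_u+u)=0$. Only the event $\{T_u>t\}$ contributes, yielding
$$\phi_a(x,u)=e^{\frac{a^2}{2}t}\,\mathbb{E}_x\!\left[\phi_a(X_t,t+u)\,1_{\{T_u>t\}}\right],$$
which rearranged gives (\ref{continuous}).

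For (\ref{continuouscond}), I would invoke the Markov property under $\mathbb{Q}_{x,u}$ that was established in (\ref{Markovkernel}): for the bounded measurable function $f(y)=\phi_a(y,t+r+u)/\phi_0(y,t+r+u)$ (bounded by Proposition \ref{propphi}(2) applied at time $t+r+u$),
$$\mathbb{E}_{\mathbb{Q}_{x,u}}\!\left[\frac{\phi_a(X_{t+r},t+r+u)}{\phi_0(X_{t+r},t+r+u)}\,\Big|\,\mathcal{F}_r\right]
=\mathbb{E}_{\mathbb{Q}_{X_r,r+u}}\!\left[\frac{\phi_a(X_t,t+r+u)}{\phi_0(X_t,t+r+u)}\right].$$
Applying (\ref{continuous}) with $x$ replaced by $X_r$ and $u$ replaced by $r+u$ (which is legitimate since $X_r\in(0,r+u)$ almost surely on the path space under $\mathbb{Q}_{x,u}$) produces the right-hand side of (\ref{continuouscond}).

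The main obstacle is the $L^2$/true martingale upgrade of $e^{\frac{a^2}{2}s}\phi_a(X_s,s+u)$: one has to check termwise that each exponential summand is a genuine martingale with integrable bracket, and that the series converges absolutely in $L^2$ uniformly on $[0,t]$, so that the limit is again a true martingale to which optional stopping applies. The cancellation at the boundary, guaranteed by the two Dirichlet conditions $\phi_a(0,\cdot)=\phi_a(\cdot,\cdot)=0$, then does the rest.
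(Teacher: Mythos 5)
Your argument is correct and follows essentially the same route as the paper: upgrade $e^{\frac{a^2}{2}s}\phi_a(X_s,s+u)$ to a true martingale by the same termwise $L^2$ argument used for $\phi_0$, use optional stopping together with the boundary vanishing of $\phi_a$ to kill the contribution of $\{T_u\le t\}$, and deduce (\ref{continuouscond}) from (\ref{continuous}) via the Markov kernel identity (\ref{Markovkernel}). The paper phrases the stopping step as the vanishing of $\mathbb{W}_x(\phi_a(X_t,t+u)e^{\frac{a^2}{2}t}1_{T_u<t})$, which is the same cancellation you obtain at $T_u\wedge t$.
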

\begin{proof}
 One proves as previously that $$(e^{\frac{a^2}{2}t}\phi_a(X_t,t+u), t\ge 0)$$ is a true martingale, which implies that
$$\mathbb{W}_x(\phi_a(X_t,t+u)e^{\frac{a^2}{2}t}1_{T_u<t})=0,$$
and  identity (\ref{continuous}). The second identity follows, using (\ref{Markovkernel}).
\end{proof}

\section{The conditioned Brownian motion and the Markov chains on the set of dominant weights}\label{convergence}
Let us focus on the Markov chains defined in section \ref{Markovchains} when  $\omega=\Lambda_0$. We recall that the weights occurring in $V(\Lambda_0)$ are
$$\Lambda_0 + k\alpha_1-(k^2+s)\delta, \, k\in \Z, \, s\in \N,$$
with respective multiplicities $p(s)$, the number of partitions of $s$ (see for instance chapter $9$ in \cite{HongKang}). If we consider, for $h=\frac{1}{2}(h_1\alpha_1+h_2\Lambda_0)$, with $h_1\in\R,h_2\in \R_+^*$, the associated probability measure $\mu_{\Lambda_0}$ defined  by (\ref{weightmeasure})   and the associated random walk $(X(n),n\ge 0)$, then its projection  on $\Z\alpha_1$ is a random walk with increments distributed according to a probability measure $\bar{\bar{\mu}}_{\Lambda_0}$ defined by
$$\bar{\bar\mu}_{\Lambda_0}(k)=C_he^{kh_1-\frac{h_2}{2}k^2}, \, k\in \Z,$$
where $C_h$ is a normalizing constant depending on $h$.
\bigskip
\paragraph{\bf The main theorem}
For $n\in \N^*$, we consider a random walk $(X^{n}_k,k\ge 0)$ starting from $0$, whose increments are distributed according to probability measure  $\mu_{\Lambda_0}$ associated to $h=\frac{2}{n}\Lambda_0$. If we denote by $(\bar{\bar{X}}^{n}_k,k\ge 0)$ its projection on $\Z\alpha_1$, standard method shows that the sequence of processes $(\frac{2}{n}\bar{\bar{X}}^{n}_{[nt]},t\ge0)$ converges towards a standard Brownian motion on $\R$ when $n$ goes to infinity.

\noindent
Let $x$ and $u$ be two positive numbers such that $x<u$. For $n\in \N^*$, we consider a Markov process $(\Lambda^{n}_k,k\ge 0)$ starting from $[nu]\Lambda_0+[xn]\frac{1}{2}\alpha_1$, with the transition probability $q_\omega$ defined by (\ref{Markovdominant}), with $\omega=\Lambda_0$ and $h=\frac{2}{n}\Lambda_0$. It is important to notice that $(\Lambda^{n}_k,\delta)=[nu]+k$ for every $k\in \N$. If $\bar\Lambda_k^{n}$ is the projection of $\Lambda_k^{n}$ on $vect\{\Lambda_0,\alpha_1\}$ for every $k\in \N$ and $n\in \N^*$, then the following convergence holds. 

\begin{theo}
The sequence of processes $(\frac{1}{n}\bar\Lambda^{n}_{[nt]}, t\ge 0)$ converges when $n$ goes to infinity towards the process  $((t+u)\Lambda_0+\frac{X_t}{2}\alpha_1,t\ge 0)$ under $\mathbb{Q}_{x,u}$ 
\end{theo}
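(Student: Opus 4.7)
The plan is to identify, for each fixed $t$, the limit law of $\frac{1}{n}\bar\Lambda^n_{[nt]}$ by applying Proposition \ref{characterization} with time parameter $T=t+u$, and then to bootstrap to finite-dimensional distributions via the Markov property. Write $\bar\Lambda^n_{[nt]}=([nu]+[nt])\Lambda_0+\frac{Y^n_t}{2}\alpha_1$ with $Y^n_t\in\{0,\dots,[nu]+[nt]\}$; the claim is that $Y^n_t/n$ converges in distribution to $X_t$ under $\mathbb{Q}_{x,u}$. Throughout, the uniform bound $|\phi_a/\phi_0|\le 1$ from Proposition \ref{propphi}(2), together with the fact that $\phi_0$ is continuous and non-vanishing on the open triangle, controls the passage to the limit inside expectations.

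\textbf{One-dimensional marginals.} I apply identity (\ref{discrete}) with $\omega=\Lambda_0$, $h=\tfrac{2}{n}\Lambda_0$, initial state $\bar\lambda_0^n=[nu]\Lambda_0+\tfrac{[nx]}{2}\alpha_1$, number of steps $[nt]$, and test parameter $\tfrac{a}{n}$ where $a=m\pi/(t+u)$ for an arbitrary $m\in\N^*$:
\begin{align*}
\E\!\left[\frac{\mbox{ch}_{\bar\Lambda^n_{[nt]}}(i\tfrac{a}{n}\alpha_1+\tfrac{2}{n}\Lambda_0)}{\mbox{ch}_{\bar\Lambda^n_{[nt]}}(\tfrac{2}{n}\Lambda_0)}\right]=\frac{\mbox{ch}_{\bar\lambda_0^n}(i\tfrac{a}{n}\alpha_1+\tfrac{2}{n}\Lambda_0)}{\mbox{ch}_{\bar\lambda_0^n}(\tfrac{2}{n}\Lambda_0)}\left[\frac{\mbox{ch}_{\Lambda_0}(i\tfrac{a}{n}\alpha_1+\tfrac{2}{n}\Lambda_0)}{\mbox{ch}_{\Lambda_0}(\tfrac{2}{n}\Lambda_0)}\right]^{[nt]}.
\end{align*}
On the right, the asymptotic lemma preceding Proposition \ref{propphi} makes the prefactor converge to $\phi_a(x,u)/\phi_0(x,u)$, while the $[nt]$-th power is the characteristic function of $\tfrac{2}{n}\bar{\bar X}^n_{[nt]}$ evaluated near the origin and therefore tends to $e^{-a^2t/2}$ by the CLT for the walk $\bar{\bar X}^n$ recalled just before the theorem. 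By identity (\ref{continuous}) of Proposition \ref{phiQ}, the limit of the right-hand side equals $\E_{\mathbb{Q}_{x,u}}[\phi_a(X_t,t+u)/\phi_0(X_t,t+u)]$. On the left, the same asymptotic lemma applied to the (now random) sequence $\bar\Lambda^n_{[nt]}$, combined with bounded convergence based on Proposition \ref{propphi}(2), gives $\E[\phi_a(Z_t,t+u)/\phi_0(Z_t,t+u)]$ for any subsequential weak limit $Z_t$ of $Y^n_t/n$ (which is automatically supported in $[0,t+u]$). Running $m$ over $\N^*$ and invoking Proposition \ref{characterization} pins down $Z_t\stackrel{\mathrm{d}}{=}X_t$ under $\mathbb{Q}_{x,u}$.

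\textbf{Finite-dimensional distributions and main obstacle.} For joint laws at $0<t_1<\cdots<t_k$ I iterate the above, using the parallel structure of the conditional identities (\ref{discrete2}) and (\ref{continuouscond}): an induction on $k$ applies Proposition \ref{characterization} to the marginal at $t_k$ given the state at $t_{k-1}$, then uses the induction hypothesis on the remaining $k-1$ times, the Markov property closing the argument on each side. The main technical obstacle is upgrading convergence of finite-dimensional distributions to weak convergence on path space, i.e.\ tightness. The transition kernel $q_{\Lambda_0}$ is a Doob-type weighting of the random walk $\bar{\bar X}^n$ by a character ratio whose rescaled version is $\phi_a/\phi_0$; by Proposition \ref{propphi} it is bounded and bounded away from $0$ on compact subsets of the open triangle, so on such subsets the increments of $\frac{1}{n}\bar\Lambda^n_{[n\cdot]}$ are comparable to those of the tight random walk $\frac{2}{n}\bar{\bar X}^n_{[n\cdot]}$. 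Near the boundaries $Y^n_t/n\to 0$ or $t+u$ the weight becomes singular, and one must show — mirroring the fact that the limiting conditioned process never reaches the boundary — that the rescaled Markov chain stays in the interior with probability tending to $1$, uniformly in $n$; I expect this uniform interior-confinement estimate to be the most delicate step.
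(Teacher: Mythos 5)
Your proof follows essentially the same route as the paper's: compactness of the space of probability measures on $[0,t+u]$, identification of every subsequential limit through the quantities $\int \frac{\phi_a}{\phi_0}\,d\mu$ via identity (\ref{discrete}), the asymptotic lemma and Proposition \ref{characterization}, then Proposition \ref{phiQ} to recognize the limit as the law of $X_t$ under $\mathbb{Q}_{x,u}$, with the conditional identities (\ref{discrete2}) and (\ref{continuouscond}) handling the finite-dimensional distributions. The tightness question you single out as the main obstacle is not actually needed: the paper only claims (and only proves) convergence in the sense of finite-dimensional distributions, so your argument already covers everything the theorem asserts.
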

\begin{proof}
Let  $t\in \R_+^*$. We denote  by $\mu^{n}_{t}$ the law of  $\frac{1}{n}(\Lambda^{n}_{[nt]},\alpha_1)$,   for $n\in \N$. The probability measure $\mu_t^n$   is carried by $[0,t+u]$. The intervale $[0,t+u]$ being a compact set, the space of probability measures on $[0,t+u]$ endowed with the weak topology is also compact.  Suppose that a subsequence of $(\mu^{n}_{t})_{n}$ converges towards $\mu_t$. 
For $\lambda\in P_m^+$, one has
$$\frac{\mbox{ch}_\lambda(\frac{a}{n}\alpha_1+\frac{2}{n}\Lambda_0)}{\mbox{ch}_\lambda(\frac{2}{n}\Lambda_0)}=\frac{\phi_a(\frac{1}{n}((\lambda,\alpha_1)+1),\frac{1}{n}(m+2))}{\phi_0(\frac{1}{n}((\lambda,\alpha_1)+1),\frac{1}{n}(m+2))}\frac{\phi_0(\frac{1}{n},\frac{4}{n})}{\phi_a(\frac{1}{n},\frac{4}{n})},$$
for any $a\in \R$, and $n\in \N^*$. 
The function  $(x,t)\mapsto\frac{\phi_a(x,t+u)}{\phi_0(x,t)}$ can be shown to be uniformly continuous on $\{(x,t)\in \R\times[0,T]: 0\le x\le u+t\}$ for every $T\in\R_+$. As $\underset{n\to\infty}\lim \frac{\phi_0(\frac{1}{n},\frac{4}{n})}{\phi_a(\frac{1}{n},\frac{4}{n})}=1$, and 
$$\Big[\frac{\mbox{ch}_{\Lambda_0}(\frac{a}{n}\alpha_1+\frac{2}{n}\Lambda_0)}{\mbox{ch}_{\Lambda_0}(\frac{2}{n}\Lambda_0)}\Big]^{[nt]}=\mathbb{E}(e^{ia\frac{2}{n}\bar{\bar X}_{[nt]}^{(n)}}),$$ 
identity (\ref{discrete}) implies that $\mu_t$ satisfies
$$\int_0^{t+u}\frac{\phi_a(z,t+u)}{\phi_0(z,t+u)}\, \mu_t(dz)=\frac{\phi_a(x,u)}{\phi_0(x,u)}e^{-\frac{a^2}{2}t}.$$ 
Proposition \ref{characterization}  implies that $(\mu^{n}_{t})_{n}$ converges towards $\mu_t$ and proposition \ref{phiQ} implies that $\mu_t$ is the distribution of $X_t$ under $\Q_{x,u}$. Convergence of the sequence of random processes $(\frac{1}{n}(\Lambda^n_{[nt]},\alpha_1), t\ge 0)$ - in the sense of finite dimensional distributions convergence - follows similarly from identity (\ref{discrete2}) and (\ref{continuouscond}).  

\end{proof}

\section{Brownian motion conditioned to remain in an interval}\label{Brownianinterval}
In this section we discuss the connection between the conditioned Brownian motion constructed in this paper and the  Brownian motion conditioned - in the sense of Doob - to remain in an interval. The connection is not surprising when we keep in mind that the dominant term in a character of a highest weight irreducible module of  an affine algebra involves the so-called asymptotic dimensions, which are related to eigenfunctions for the Laplacian on an interval (see  chapter 13 of \cite{Kac}). Let $u\in \R_+^*$. The function $h$ defined on $[0,u]$ by $$h(x)=\sin(\pi\frac{x}{u}), \, x\in [0,u],$$ is the Dirichlet eigenfunction on the interval $[0,u]$  corresponding to the eigenvalue $-\frac{\pi^2}{u^2}$ at the bottom of the spectrum.  Brownian motion conditioned - in the sense of Doob - to remain in the interval $[0,u]$, has the Doob-transformed semi-group $(q_t)_{t\ge 0}$ defined for $t\in \R_+^*$ by
$$q_t(x,y)=\frac{h(y)}{h(x)}e^{\frac{\pi^2}{u^2} \frac{t}{2}}p_t^0(x,y), \quad x,y\in ]0,u[, $$
where $p_t^0$ is the semi-group of the standard Brownian motion on $\R$, killed on the boundary of $[0,u]$.

For $c\in ]0,1[$, one defines  a space-time harmonic function  $\phi_0^{(c)}$ on $\R\times\R_+^*$   letting 
$$ \phi_0^{(c)}(x,t)=\phi_0(cx,c^2 t),$$
for $x,t\in \R\times\R_+^*$. This function satisfies the following boundary conditions 
$$ \forall t\in \R_+^*, \,\, \left\{
    \begin{array}{ll}
      \phi_0^{(c)}(0,t)&=0 \\
  \phi_0^{(c)}(ct,t)&=0.
    \end{array}
\right. $$ 
As in section \ref{Brownian}, one defines for a real number $x$ satisfying $0<x<u$,   a probability $\mathbb Q_{x,u}^{(c)}$ on $C(\R_+)$  letting 
$$\mathbb{Q}^{(c)}_{x,u}(A)=\E_x(\frac{\phi^{(c)}_0(X_t,t+\frac{u}{c})}{\phi^{(c)}_0(x,\frac{u}{c})}1_{\{T^{(c)}_u>t\}\cap A}), \, A\in \mathcal{F}_t,$$
where $T_u^{(c)}=\inf\{s\ge 0: X_s=0 \textrm{ or } X_s=cs+u\}.$ Thus, under the probability measure $\mathbb{Q}_{x,u}^{(c)}$, $(t+\frac{u}{c},X_t)_{t\ge 0}$ is a space-time Brownian motion starting from $(\frac{u}{c},x)$, conditioned to remain in  the domain
$$\{(r,z)\in \R\times \R: 0\le z\le cr\}.$$
\begin{theo} The probability measure $Q_{x,u}^{(c)}$ converges, when $c$ goes to $0$, towards the law of a standard Brownian starting from $x$, conditioned - in the sense of Doob - to remain in $[0,u]$.
\end{theo}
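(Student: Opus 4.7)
The plan is to pass to the limit $c\to 0$ directly in the Radon--Nikodym density defining $\mathbb{Q}_{x,u}^{(c)}$ with respect to the Wiener measure $\mathbb{W}_x$ on each $\mathcal{F}_t$. The target is the Doob $h$-transform associated with $h(y)=\sin(\pi y/u)$ on $[0,u]$, whose density on $\mathcal{F}_t$ is $\tfrac{h(X_t)}{h(x)}e^{\pi^2 t/(2u^2)}\mathbf{1}_{\{T>t\}}$, where $T=\inf\{s\ge 0:X_s\in\{0,u\}\}$. Hence it is enough to prove
$$\frac{\phi_0^{(c)}(X_t,t+u/c)}{\phi_0^{(c)}(x,u/c)}\mathbf{1}_{\{T_u^{(c)}>t\}}\xrightarrow[c\to 0]{}\frac{h(X_t)}{h(x)}e^{\pi^2 t/(2u^2)}\mathbf{1}_{\{T>t\}}$$
pointwise $\mathbb{W}_x$-almost surely and with enough uniformity to apply dominated convergence.

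To analyse the density, I apply the Poisson summation identity of Lemma \ref{Poisson} after the rescaling $\phi_0^{(c)}(y,s)=\phi_0(cy,c^2 s)$, which yields
$$\phi_0^{(c)}(y,t+u/c)=e^{\frac{cy^2}{2(ct+u)}}\sqrt{\frac{\pi}{2c(ct+u)}}\sum_{k\in\Z}k\pi\sin\!\left(\frac{k\pi y}{ct+u}\right)e^{-\frac{k^2\pi^2}{2c(ct+u)}}.$$
The weights $e^{-k^2\pi^2/(2c(ct+u))}$ decay super-exponentially in $1/c$ for $|k|\ge 2$ relative to $k=\pm 1$, so the $k=\pm 1$ contributions dominate with a remainder bounded by $\sum_{|k|\ge 2}|k|e^{-(k^2-1)\pi^2/(2c(ct+u))}\to 0$ uniformly in $y$. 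Taking the ratio with the $t=0$ expression, the algebraic prefactors cancel in the limit, the difference of exponents $\tfrac{\pi^2}{2cu}-\tfrac{\pi^2}{2c(ct+u)}=\tfrac{\pi^2 t}{2u(ct+u)}$ tends to $\pi^2 t/(2u^2)$, and $\sin(\pi X_t/(ct+u))\to h(X_t)$, delivering the desired pointwise limit.

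For the indicator, the upper boundary $s\mapsto cs+u$ converges uniformly on compacts to the constant $u$, so $T_u^{(c)}\to T$ almost surely under $\mathbb{W}_x$. On the event $\{T_u^{(c)}>t\}$ one has $X_t\in(0,ct+u)$, so $\sin(\pi X_t/(ct+u))\in[0,1]$, which combined with the Poisson-summed asymptotic supplies a bound on the density that is uniform in $c$ for fixed $t,x,u$. Dominated convergence then gives the convergence of $\mathbb{E}_{\mathbb{Q}_{x,u}^{(c)}}[F]$ for every bounded $\mathcal{F}_t$-measurable $F$, hence convergence of finite-dimensional marginals; the Markov property together with the uniform density bound supplies tightness to upgrade this to weak convergence on $C(\R_+)$. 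The main technical obstacle is the uniform-in-$y$ control of the tail of the Poisson series needed for this dominating bound, and the Poisson-summed form is precisely what makes the exponential decay of the higher modes transparent.
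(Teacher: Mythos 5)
Your proof is correct and takes essentially the same route as the paper: apply the Poisson-summed form of $\phi_0$ from Lemma \ref{Poisson} to extract the pointwise limit $\frac{\sin(\pi y/u)}{\sin(\pi x/u)}e^{\pi^2 t/(2u^2)}$ of the density ratio, then pass to the limit using the uniform boundedness of the quotient for $y\in[0,ct+u]$ and $c\in\,]0,1[$. You simply make explicit what the paper leaves implicit (the dominance of the $k=\pm1$ modes, the almost sure convergence $T_u^{(c)}\to T$ of the exit times, and the dominated convergence step), so the substance is identical.
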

\begin{proof} Lemma \ref{Poisson}  easily implies that
$$\underset{c\to 0}\lim \frac{\phi_0^{(c)}(y,t+\frac{u}{c})}{\phi_0^{(c)}(x,\frac{u}{c})}=\frac{\sin(y\frac{\pi}{u})}{\sin(x\frac{\pi}{u})}e^{\frac{\pi^2}{u^2} \frac{t}{2}},$$
for every $y\in[0,u]$, $t>0$, which implies the theorem, as the quotient inside the  limit is uniformly bounded for $y\in [0,ct+u]$ and $c\in ]0,1[$.
\end{proof}

\end{document}